\numberwithin{equation}{section}
\theoremstyle{plain}
\newtheorem{theorem}{Theorem}[section]
\newtheorem{proposition}[theorem]{Proposition}
\theoremstyle{definition}
\newtheorem{example}[theorem]{Example}
\newcommand{\UP}{\blacktriangle}
\newcommand{\DOWN}{\blacktriangledown}
\begin{document}

\tolerance=5000

\title[Monteiro spaces and rough sets: Models for Nelson algebras]%
{Monteiro spaces and rough sets determined by quasiorder relations: Models for Nelson algebras}

\author[J.~J{\"a}rvinen]{Jouni J{\"a}rvinen}
\address{J.~J{\"a}rvinen, Sirkankuja 1, 20810~Turku, Finland}
\email{Jouni.Kalervo.Jarvinen@gmail.com}
\urladdr{\url{http://sites.google.com/site/jounikalervojarvinen/}}

\author[S.~Radeleczki]{S{\'a}ndor Radeleczki}
\thanks{Acknowledgements: The research of the second author was carried out as
part of the TAMOP-4.2.1.B-10/2/KONV-2010-0001 project supported by the
European Union, co-financed by the European Social Fund.}
\address{S.~Radeleczki, Institute of Mathematics\\ 
University of Miskolc\\3515~Miskolc-Egyetemv{\'a}ros\\Hungary}
\email{matradi@uni-miskolc.hu}
\urladdr{\url{http://www.uni-miskolc.hu/~matradi/}}

\maketitle

\begin{abstract}
The theory of rough sets provides a widely used modern tool, and in particular, rough sets induced 
by quasiorders are in the focus of the current interest, because they are strongly interrelated with the 
applications of preference relations and intuitionistic logic.
In this paper, a structural characterisation of rough sets 
induced by quasiorders is given. These rough sets form Nelson algebras defined on algebraic lattices.
We prove that any Nelson algebra can be represented as a subalgebra of an
algebra defined on rough sets induced by a suitable quasiorder. We also show
that Monteiro spaces, rough sets induced by quasiorders and Nelson algebras
defined on  $\mathrm{T_0}$-spaces that are Alexandrov topologies can be
considered as equivalent structures, because they determine each other up to
isomorphism.
\end{abstract}

\section{Rough sets}

The theory of \emph{rough set}s introduced by Pawlak \cite{Pawl82}
can be viewed as an extension of  the classical set theory. Its fundamental 
idea is that our knowledge about the properties of the objects of a 
given universe of discourse $U$ may be inadequate or incomplete in a sense that the
objects of the universe $U$ can be observed only within the accuracy of indiscernibility relations.
According to Pawlak's original definition, an indiscernibility relation $E$ on $U$ 
is an equivalence relation (reflexive, symmetric, and transitive binary relation)
interpreted so that two elements of $U$ are $E$-related if 
they cannot be distinguished by their properties known by us. Since there is a one-to-one 
correspondence between equivalences and partitions, each indiscernibility relation 
induces a partition on $U$ such that its blocks consist of objects that are
precisely similar with respect to our information.  In this sense,  our ability to distinguish objects 
can be understood to be blurred -- we cannot distinguish individual objects, only 
groups of similar objects. But this is often the case in practice;
we may have objects that are indistinguishable by their properties, but one of them 
belongs to some set (e.g. people that have  certain disease), 
while the other one does not.

Each subset $X$ of $U$ can be approximated by two sets: 
the \emph{lower approximation} $X^\DOWN$ of $X$ consists of the $E$-equivalence classes that 
are included in $X$, and the \emph{upper approximation} $X^\UP$ of $X$ contains the $E$-classes 
intersecting $X$. The lower approximation $X^\DOWN$ can be viewed as the set of 
elements that are \emph{certainly} in $X$ and the upper approximation $X^\UP$ can be 
considered as the set of elements that \emph{possibly} belong to $X$. A consequence of
this is that the membership functions of sets become three-valued: 1 (the element belongs to the set), 
0 (the element is not in the set), $u$ (unknown borderline case: the element is simultaneously inside 
and outside the set, at some degree).

Two subsets $X$ and $Y$ of $U$ are defined to be $\equiv$-related 
if both of their approximations are the same, that is, $X^\DOWN =
Y^\DOWN$ and $X^\UP = Y^\UP$. Clearly, the relation $\equiv$ is an equivalence, 
and its equivalence classes are called \emph{rough sets}. 
Each element in the same rough set looks the same, when
observed through the knowledge given by the indiscernibility relation $E$.
Namely, if $X \equiv Y$, then exactly the same elements belong certainly and
possibly to $X$ and $Y$.

Lattice-theoretical study of rough sets was initiated by T.~B.~Iwi{\'n}ski in \cite{Iwin87}.
He pointed out that since each rough set is uniquely determined by the lower and the upper 
approximations of its members, the set of rough sets can be defined as
\begin{equation*}
\mathit{RS} = \{ (X^\DOWN,X^\UP) \mid X \subseteq U \}. 
\end{equation*}
In addition, $\mathit{RS}$ may be canonically ordered by the coordinatewise order:
\begin{equation*}
(X^\DOWN,X^\UP) \leq (Y^\DOWN,Y^\UP) \iff X^\DOWN \subseteq Y^\DOWN \mbox{ \
and \ } X^\UP \subseteq Y^\UP.
\end{equation*}

In computer science, rough sets represent a widely used modern tool; they are applied,
for instance, to approximative reasoning in feature selection problems, learning theory, and combined 
with methods of fuzzy sets or of formal concept analysis they are used in data mining also 
\cite{hassanien2007rough}.

In the literature can be found numerous studies on rough sets that are
determined by different types of relations reflecting distinguishability or
indistinguishability of the elements of the universe of discourse $U$ (see e.g.~\cite{DemOrl02}). 
If $R \subseteq U\times U$ is an arbitrary binary relation, then the lower and upper
approximations of a set $X\subseteq U$ are defined as follows.
For any $x\in U$, we denote $R(x)= \{y \in U \mid x \, R \, y \}$.
The \emph{lower approximation} of $X$ is 
\[
X^\DOWN = \{x \in U \mid R(x)\subseteq X\},
\]
and $X$'s \emph{upper approximation} is 
\[
X^\UP=\{x \in U \mid R(x) \cap X \neq \emptyset\}.
\]

If $R$ is reflexive, then $X^\DOWN \subseteq X \subseteq X^\UP$. In the
case $R$ is a \emph{quasiorder}, that is, $R$ is a reflexive and transitive binary 
relation on $U$, we have $x \, R \, y \iff R(y)\subseteq R(x)$, and 
the map $X \mapsto X^\UP$ is a topological closure operator and 
$X \mapsto X^\DOWN$ is a topological interior operator on the set $U$ 
(see \cite{Jarv07}).

Rough sets induced by quasiorders are in the focus of current interest; see 
\cite{JPR12,JarRad11, JRV09,  umadevi12}, for example. Let us denote by $\wp(U)$ 
the power set of $U$, that is, the set of all subsets of $U$. It was shown 
by J.~J\"{a}rvinen, S.~Radeleczki, and L.~Veres \cite{JRV09} that $\mathit{RS}$  is a complete 
sublattice of $\wp(U) \times\wp(U)$ ordered by the coordinatewise set-inclusion relation, which 
means that $\mathit{RS}$ is an algebraic completely distributive lattice such that
\[
\bigwedge\left\{  ( X^\DOWN, X^\UP ) \mid X\in\mathcal{H} \right\}  =
\Big ( \bigcap_{X \in\mathcal{H}} X^\DOWN, \bigcap_{X\in\mathcal{H}} X^\UP
\Big )
\]
and
\[
\bigvee\left\{  ( X^\DOWN, X^\UP ) \mid X\in\mathcal{H} \right\}  =
\Big ( \bigcup_{X\in\mathcal{H}} X^\DOWN, \bigcup_{X\in\mathcal{H}} X^\UP \Big )
\]
for all $\mathcal{H} \subseteq \wp(U)$. Since $\mathit{RS}$ is a completely distributive complete lattice, 
it is a Heyting algebra, that is, a lattice with $0$ such that for each $a,b$, there is a greatest 
element $x$ with $a \wedge x \leq b$. This element is the \emph{relative pseudo\-complement} of 
$a$ with respect to $b$, and is denoted $a \Rightarrow b$.

\emph{Constructive logic with strong negation} was introduced by Nelson \cite{Nelson49} and 
independently by Markov \cite{Markov50}. It is often called simply as \emph{Nelson logic}.
It is an extension of the intuitionistic propositional logic by \emph{strong
negation} $\sim$. The intuitive reading of ${\sim} A$ is ``a counterexample of $A$''.
As described in \cite{Vaka05}, one sentence $A$ may have many counterexamples and each of them
needs to contradict $A$. For instance, a counterexample of the sentence ``This
apple is red'' is ``This apple is green'' or ``This apple is yellow''.
The axioms of Nelson logic can be interpreted as ``algorithms'' of constructing counterexamples of 
compound sentences by means of given counterexamples of their components, and
the name \textit{strong negation} comes from the fact that the formula 
${\sim} A \to \neg A$ is a theorem of the logic. Nelson logic is axiomatized by
extending intuitionistic logic with the formulas:
\begin{enumerate}[({NL}1)]
\item ${\sim} A \to (A \to B)$  \\
\emph{(a counterexample of A contradicts A, that is, $A \wedge {\sim} A$ implies everything)}

\item ${\sim} (A \to B) \leftrightarrow A \wedge {\sim} B$ \\
\emph{(a counterexample of $A \to B$ can be constructed by the conjunction of 
$A$ with a counterexample of $B$)}

\item ${\sim} (A \wedge B)\leftrightarrow {\sim} A \vee {\sim} B$ \\
\emph{(a counterexample of a conjunction can be constructed
as a disjunction of counterexamples of its components)}

\item ${\sim} (A \vee B) \leftrightarrow {\sim} A \wedge {\sim} B$ \\
\emph{(a counterexample of a disjunction can be can be constructed
as a conjunction of counterexamples of its components)}

\item ${\sim}\,\neg A \leftrightarrow A$ \\
\emph{($A$ is a counterexample of $\neg A$)}

\item ${\sim}\,{\sim} A \leftrightarrow A$ \\
\emph{($A$ is a counterexample of a counterexample of $A$)}
\end{enumerate}

A \textit{Nelson algebra}  is a structure $\mathbb{A} = (A, \vee, \wedge, \rightarrow, {\sim}, 0, 1)$ 
such that $(A,\vee,\wedge,0,1)$ is a bounded distributive lattice and for all $a,b,c\in A$:
\begin{enumerate}[({N}1)]
\item ${\sim}\,{\sim}a  =  a$,
\item $a \leq b$  \ if and only if \   ${\sim}b \leq {\sim}a$,
\item $a \wedge {\sim}a  \leq  b \vee {\sim}b$,
\item $a\wedge c \leq {\sim} a\vee b$ \ if and only if \ $c\leq a\rightarrow b$,
\item $(a\wedge b)\rightarrow c = a \rightarrow (b\rightarrow c)$.
\end{enumerate}
Nelson algebras provide models for constructive 
logic with strong negation, as shown by H.~Rasiowa \cite{Rasiowa74}.

In each Nelson algebra, an operation $\neg$ can be defined as
$\neg a = a \to 0$. The operation $\neg$ is called \emph{weak negation}.
A Nelson algebra $\mathbb{A}$ is \emph{semi-simple} if $a \vee \neg a = 1$ for all $a \in A$. It is well
known that semi-simple Nelson algebras coincide with three-valued {\L}ukasiewicz
algebras and regular double Stone algebras. Representations of semi-simple Nelson algebras,
three-valued {\L}ukasiewicz algebras, and regular double Stone algebras in terms of
rough sets determined by equivalences are found in 
\cite{Comer93, GeWa92, Iturrioz99, pagliani2008geometry, PomPom88}.

We proved in \cite{JarRad11} that in the case rough approximations are determined by a
quasiorder, the bounded distributive lattice $\mathit{RS}$ forms a Nelson algebra. 
We denote this Nelson algebra by $\mathbb{RS}$, and the operations are defined by: 
\begin{align*}
 (X^\DOWN,X^\UP) \vee  (Y^\DOWN,Y^\UP)   & = (X^\DOWN \cup Y^\DOWN, X^\UP \cup Y^\UP), \\
 (X^\DOWN,X^\UP) \wedge (Y^\DOWN,Y^\UP) & = (X^\DOWN \cap Y^\DOWN, X^\UP \cap Y^\UP), \\
 {\sim}(X^\DOWN,X^\UP) &= (-X^\UP, -X^\DOWN), \\
  (X^\DOWN,X^\UP)  \to  (Y^\DOWN,Y^\UP)  &= ( (-X^\DOWN \cup Y^\DOWN)^\DOWN, -X^\DOWN \cup Y^\UP), 
\end{align*}
where $-X$ denotes the set-theoretical complement $U \setminus X$ of the subset $X \subseteq U$.
The $0$-element is $(\emptyset,\emptyset)$ and $(U,U)$ is the 1-element (see also \cite{JPR12}).
We showed in \cite{JarRad11} that if $\mathbb{A}$ is a 
Nelson algebra defined on an algebraic lattice, then there exists a set $U$ and a quasiorder 
$R$ on $U$ such that $\mathbb{A}$ and the Nelson algebra $\mathbb{RS}$ are isomorphic.
Note that an \emph{algebraic lattice} $L$ is a complete lattice with the property that 
any element of it is equal to the join of some compact elements of $L$.
In \cite{JPR12}, we proved an algebraic completeness theorem for Nelson logic in terms 
of finite rough set-based Nelson algebras determined by quasiorders.

\section{Monteiro spaces}

An \emph{Alexandrov topology} $\mathcal{T}$ on $X$ is a topology in which an arbitrary intersection of 
open sets is open, or equivalently, every point $x \in X$ has the least neighbourhood $N(x) \in \mathcal{T}$.
For an Alexandrov topology $\mathcal{T}$, the least neighbourhood of a point $x$ is
$N(x) = \bigcap \{ B \in \mathcal{T} \mid x \in B \}$. We denote by $\mathcal{C}$ and $\mathcal{I}$ the
closure and the interior operators of $\mathcal{T}$, respectively. Then, $\mathcal{T} = \{ \mathcal{I}(B) \mid B \subseteq X\}$.
Additionally, $\mathcal{B}_\mathcal{T} = \{ N(x) \mid x \in X\}$ forms a \emph{smallest base} of $\mathcal{T}$,
implying that for all $B \in \mathcal{T}$, $B = \bigcup_{x \in B} N(x)$.
Note that a \emph{complete ring of sets} means exactly the same thing as Alexandrov topology
\cite{Alex37,Birk37}. For an Alexandrov topology $\mathcal{T}$ on $X$,  we may define a quasiorder 
$R_\mathcal{T}$ on $X$ by $x \, R_\mathcal{T} \, y$ if and only if $y \in N(x)$.

On the other hand, let $R$ be a quosiorder on $X$. The set of all $R$-closed subsets of $X$ forms an Alexandrov topology
$\mathcal{T}_R$, meaning that $B \in \mathcal{T}_R$ if and only if $x \in B$ and $x \, R \, y$ imply $y \in B$. 
Since the set $R(x)$ of $R$-successors is $R$-closed, we have $N(x) = R(x)$ in $\mathcal{T}_R$. 
In addition, $\mathcal{I}(B) = \{ x \in X \mid R(x) \subseteq B\} = B^\DOWN$ and
$\mathcal{C}(B) = \{ x \in X \mid R(x) \cap B \neq \emptyset \} = B^\UP$ for any $B \subseteq X$.

The correspondences $\mathcal{T} \mapsto R_\mathcal{T}$ and $R \mapsto \mathcal{T}_R$ are
mutually inverse bijections  between the class of all Alexandrov topologies and the class of the quasiorders 
on the set $X$.  In addition, it is known that a quasiorder $R$ is a partial order if and only if 
$\mathcal{T}_R$ satisfies the $\mathrm{T_0}$-separation axiom, that is, for any two different points 
$x$ and $y$, there is an open set which contains one of these points and not the other. 
Topologies satisfying the $\mathrm{T_0}$-separation axiom are called the $\mathrm{T_0}$-\emph{spaces}.
Therefore, there is a one-to-one correspondence between partial orders and Alexandrov topologies that are
$\mathrm{T_0}$-spaces.

For each topology $\mathcal{T}$ on $X$, the lattice ($\mathcal{T},\subseteq)$ forms a Heyting algebra 
such that the relative pseudo\-complement of $B,C \in \mathcal{T}$ is $B \Rightarrow C = \mathcal{I}(-B \cup C)$. 
In particular, for a quasiorder $R$, the relative pseudo\-complement in $\mathcal{T}_R$ can be expressed as
\[ 
  B \Rightarrow C = \{ x \in X \mid x \, R \, y \text{ and } y \in B \text{ imply } y \in C \}. 
\]

Let $(X,\leq,g)$ be a structure such that $(X,\leq)$ is a partially ordered set and $g$ is 
a map on $X$ satisfying the following conditions for all $x,y$:
\begin{enumerate}[({J}1)] \label{Page:Js}
 \item if $x \leq y$, then $g(y) \leq g(x)$,
 \item $g(g(x)) = g(x)$,
 \item $x \leq g(x)$ or $g(x) \leq x$,
 \item if $x,y \leq g(x),g(y)$, then there is $z \in X$ such that $x,y \leq z \leq g(x),g(y)$.
\end{enumerate}
According to D.~Vakarelov \cite{Vaka77}, these systems are called \emph{Monteiro spaces}, because
A.~Monteiro was the first who introduced them in \cite{Mont63a}.

Let $\leq$ be a partial order on $X$. It is typical that $\leq$-closed sets are called \emph{upsets}. We denote
by $\mathcal{U}(X)$ the set of all upsets of $X$. By the above, $\mathcal{U}(X)$ forms a $\mathrm{T_0}$-space. 

As proved by D.~Vakarelov \cite{Vaka77}, each Monteiro space $\mathcal{M} = (X,\leq,g)$ defines a Nelson algebra
\[ \mathbb{N}_\mathcal{M} = (\mathcal{U}(X),\vee,\wedge,\to,\sim,0,1) ,\]
where the operations are defined by:
\begin{align*}
 & 0 = \emptyset,&           & 1 = X, \\
 & A \vee B = A \cup B,&     & A \wedge B = A \cap B, \\
 & {\sim} A = \{ x \in X \mid g(x) \notin A \},&  &A \to B = A \Rightarrow ({\sim} A \cup B);
\end{align*}
note that the operation $\Rightarrow$ is defined in $\mathcal{U}(X)$ by:
\[ B \Rightarrow C = \{ x \in X \mid x \leq y \text{ and } y \in B \text{ imply } y \in C \}. \]
 
Let $\mathbb{A} = (A,\vee,\wedge,\to,{\sim},0,1)$ be a Nelson algebra. We denote by
$\mathcal{F}_p$ the set of prime filters of $\mathbb{A}$. We define for any
$P \in \mathcal{F}_P$ the set of elements
\[ g(P) = \{ x \in A \mid {\sim}x \notin P\}.
\]
The set $g(P)$ is known to be a prime filter of $A$, and the mapping $g$ on $\mathcal{F}_P$ satisfies
the conditions (J1)--(J4) with respect to the set-inclusion order (see also \cite{Cign86}). 
Thus, the structure $\mathcal{M} = (\mathcal{F}_p,\subseteq,g)$ is a Monteiro space and it determines
a Nelson algebra 
\[
\mathbb{N}_\mathcal{M} = (\mathcal{U}(\mathcal{F}_p), \cup, \cap, \to, {\sim}, \emptyset, \mathcal{F}_p).
\]

For any $x \in A$, we define a set of prime filters as
\[ h(x) = \{ P \in \mathcal{F}_p \mid x \in P \}. \]
If $P \in h(x)$ and $P \subseteq Q$, then $x \in P \subseteq Q$, that is, $Q \in h(x)$. 
Therefore, $h(x) \in \mathcal{U}(\mathcal{F}_p)$. Because $(A,\leq)$ is a distributive
a lattice, for all $x \neq y$, there exists a prime filter $P$ such that $x \in P$ and
$y \notin P$, or $x \notin P$ and $y \in P$ by the well-known ``prime filter theorem'' 
of distributive lattices´´.  This means that $h(x) \neq h(y)$, and
hence $h$ is an injection from $A$ to $\mathcal{U}(\mathcal{F}_p)$.

Next we will show that $h$ is a Nelson-algebra homomorphism:

\begin{itemize}
 \item $h(0) = \emptyset$, because prime filters must be proper filters. Therefore, $0$
does not belong to any prime filter.

\item $h(1) = \mathcal{F}_p$, because $1$ must belong to all prime filters.

\item $P \in h(x \vee y) \iff x \vee y \in P \iff x \in P \text{ or } y \in P \iff
P \in h(x) \text{ or } P \in h(y) \iff P \in h(x) \cup h(y)$.

\item $P \in h(x \wedge y) \iff x \wedge y \in P \iff x \in P \text{ and } y \in P \iff
P \in h(x) \text{ and }  P \in h(y) \iff P \in h(x) \cap h(y)$.

\item $P \in h({\sim}x) \iff {\sim} x \in P \iff x \notin g(P) \iff g(P) \notin h(x) \iff
P \in {\sim}h(x)$.
\end{itemize}

\noindent%
D.~Vakarelov \cite{Vaka77} has proved that for any $P \in \mathcal{F}_p$, $a \to b \in P$
if and only if for all $Q \in \mathcal{F}_p$, $P \subseteq Q$, $a \in Q$, and $a \in g(Q)$
imply $b \in Q$. Therefore, 
\begin{itemize}
\item $P \in h(x \to y) \iff x \to y \in P \iff$ for all $Q \in \mathcal{F}_p$, 
$P \subseteq Q$, $x \in Q$, and $x \in g(Q)$ imply $y \in Q \iff$ 
for all $Q \in \mathcal{F}_p$,  $P \subseteq Q$, $Q\in h(x)$, and $Q \notin {\sim}h(x)$ imply $Q \in h(y) \iff$ 
for all $Q \in \mathcal{F}_p$,  $P \subseteq Q$ and $Q\in h(x)$ imply $Q \in {\sim}h(x) \cup h(y)  \iff
P \in h(x) \Rightarrow ({\sim}h(x) \cup h(y)) = h(x) \to h(y)$.
\end{itemize}

We have now proved that $h$ is an injective homomorphism $A \to \mathcal{U}(\mathcal{F}_p)$.
Thus, $h$ is an Nelson-algebra embedding and we can write the following proposition that
appears already in \cite{Vaka77}.
\begin{proposition} Let $\mathbb{A}$ be a Nelson algebra. Then, $\mathbb{A}$ is
 isomorphic to a subalgebra of $\mathbb{N}_\mathcal{M}$, where $\mathcal{M}$ is
the Monteiro space $(\mathcal{F}_p,\subseteq,g)$.
\end{proposition}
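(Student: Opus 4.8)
The plan is to observe that essentially all the work has already been done in the discussion preceding the statement, so the proof is a matter of packaging. First I would recall the map
\[
h\colon A \to \mathcal{U}(\mathcal{F}_p), \qquad h(x) = \{ P \in \mathcal{F}_p \mid x \in P \},
\]
and note the two facts established above: each $h(x)$ is indeed an upset of $(\mathcal{F}_p,\subseteq)$, so $h$ lands in $\mathcal{U}(\mathcal{F}_p)$; and $h$ is injective, because by the prime filter theorem for distributive lattices any two distinct elements of the underlying distributive lattice $(A,\vee,\wedge,0,1)$ are separated by some prime filter. This gives an injection of the carrier of $\mathbb{A}$ into the carrier of $\mathbb{N}_\mathcal{M}$.

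Next I would invoke the itemised computations already carried out, which show that $h$ commutes with every operation: $h(0)=\emptyset$, $h(1)=\mathcal{F}_p$, $h(x\vee y)=h(x)\cup h(y)$, $h(x\wedge y)=h(x)\cap h(y)$, $h({\sim}x)={\sim}h(x)$, and $h(x\to y)=h(x)\to h(y)$. The lattice cases and the negation case are immediate from the definitions of prime filter and of $g(P)=\{x\mid {\sim}x\notin P\}$, while the implication case uses Vakarelov's criterion for when $a\to b$ lies in a prime filter $P$, which unwinds precisely to membership in $h(x)\Rightarrow({\sim}h(x)\cup h(y))$. Hence $h$ is a Nelson-algebra homomorphism. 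Consequently its image $h(A)$ contains $\emptyset$ and $\mathcal{F}_p$ and is closed under $\cup,\cap,\to,{\sim}$, i.e.\ $h(A)$ is a subalgebra of $\mathbb{N}_\mathcal{M}$; and $h$, being an injective homomorphism onto $h(A)$, is an isomorphism $\mathbb{A}\cong h(A)$. This is exactly the assertion of the proposition.

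I do not expect a genuine obstacle here, since the non-trivial ingredients are the lemmas quoted from \cite{Vaka77, Cign86}: that $g(P)$ is always a prime filter, that $(\mathcal{F}_p,\subseteq,g)$ satisfies (J1)--(J4) so that $\mathbb{N}_\mathcal{M}$ is a bona fide Nelson algebra, and the prime-filter characterisation of $\to$. If one insisted on proving those in place rather than citing them, the most delicate point would be verifying condition (J4) for $g$ acting on prime filters: given prime filters $P,Q$ with $P,Q\subseteq g(P),g(Q)$, one must exhibit a prime filter $S$ with $P,Q\subseteq S\subseteq g(P),g(Q)$, which requires a careful filter-generation and prime-filter-separation argument. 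Within the setting of this excerpt, however, that work is assumed, and the proposition reduces to the bookkeeping above.
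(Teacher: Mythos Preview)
Your proposal is correct and matches the paper's approach exactly: the paper places all the work (well-definedness of $h$, injectivity via the prime filter theorem, and the itemised verification that $h$ preserves $0,1,\vee,\wedge,{\sim},\to$) in the discussion preceding the proposition, and then states the proposition as a summary. Your packaging of these facts into ``$h$ is an injective homomorphism, hence an isomorphism onto its image, which is a subalgebra'' is precisely what the paper does.
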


For a Nelson algebra $\mathbb{A}$, the family of sets  $\mathcal{U}(\mathcal{F}_p)$
is an Alexandrov topology. Therefore, $(\mathcal{U}(\mathcal{F}_p),\subseteq)$ forms an algebraic lattice. 
As we already noted, we showed in \cite{JarRad11} that if $\mathbb{A}$ is a Nelson algebra such that 
its underlying lattice is algebraic, then there exists a universe $U$ and a quasiorder $R$ on $U$ such that 
$\mathbb{A} \cong \mathbb{RS}$. This means that $\mathbb{N}_\mathcal{M}$, where $\mathcal{M}$ is
the Monteiro space $(\mathcal{F}_p,\subseteq,g)$, is isomorphic to
some rough set Nelson algebra $\mathbb{RS}$, and let us denote by
$\varphi$ this isomorphism in question.

It is now obvious that the mapping $\varphi \circ h$ is an embedding
from $\mathbb{A}$ to $\mathbb{RS}$, and we can write the following
theorem.

\begin{theorem} \label{Thm:NelsonRepresentation}
Let $\mathbb{A}$ be a Nelson algebra. Then, there
exists a set $U$ and a quasiorder $R$ on $U$ such that $\mathbb{A}$
is isomorphic to a subalgebra of $\mathbb{RS}$.
\end{theorem}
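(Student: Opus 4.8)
The plan is to obtain the theorem by concatenating material that is already assembled in the text: the prime-filter representation of $\mathbb{A}$ into a Monteiro-space Nelson algebra, the observation that the latter is carried by an algebraic lattice, and the representation theorem of \cite{JarRad11} for Nelson algebras defined on algebraic lattices. Concretely, first I would invoke the Proposition above: with $\mathcal{M} = (\mathcal{F}_p,\subseteq,g)$ the Monteiro space of prime filters of $\mathbb{A}$, where $g(P) = \{x \in A \mid {\sim}x \notin P\}$, the map $h(x) = \{P \in \mathcal{F}_p \mid x \in P\}$ is an injective Nelson-algebra homomorphism $\mathbb{A} \to \mathbb{N}_\mathcal{M}$, so that $\mathbb{A}$ is isomorphic to a subalgebra of $\mathbb{N}_\mathcal{M}$.

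Next I would verify that the underlying lattice $(\mathcal{U}(\mathcal{F}_p),\subseteq)$ of $\mathbb{N}_\mathcal{M}$ is algebraic. Since $\mathcal{U}(\mathcal{F}_p)$ is the family of upsets of the poset $(\mathcal{F}_p,\subseteq)$, it is an Alexandrov topology, hence closed under arbitrary unions and intersections; for each $P$ the principal upset $\mathord{\uparrow}P$ is compact in this lattice (if $\mathord{\uparrow}P \subseteq \bigcup\mathcal{H}$, then $P$ lies in some single $A \in \mathcal{H}$, whence $\mathord{\uparrow}P \subseteq A$ because $A$ is an upset), and every upset $B$ equals $\bigcup_{P \in B}\mathord{\uparrow}P$. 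Thus every element of $\mathcal{U}(\mathcal{F}_p)$ is a join of compact elements, so the lattice is algebraic and $\mathbb{N}_\mathcal{M}$ is a Nelson algebra defined on an algebraic lattice.

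Finally, I would apply the result of \cite{JarRad11} recalled above to $\mathbb{N}_\mathcal{M}$: there is a set $U$ and a quasiorder $R$ on $U$ together with an isomorphism $\varphi\colon \mathbb{N}_\mathcal{M} \to \mathbb{RS}$. Then $\varphi \circ h\colon \mathbb{A} \to \mathbb{RS}$ is injective and preserves all Nelson operations, so $\mathbb{A}$ is isomorphic to its image, which is a subalgebra of $\mathbb{RS}$; this is the assertion. The only genuinely new point to check is the algebraicity of $\mathcal{U}(\mathcal{F}_p)$, and that is the standard fact about Alexandrov topologies sketched above; the real content is packed into the imported representation theorem of \cite{JarRad11}, which we are entitled to use, so no further manipulation of the operations $\vee,\wedge,\to,{\sim}$ is needed.
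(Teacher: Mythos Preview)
Your proposal is correct and follows exactly the paper's route: embed $\mathbb{A}$ into $\mathbb{N}_\mathcal{M}$ via the prime-filter map $h$, observe that $\mathcal{U}(\mathcal{F}_p)$ is an Alexandrov topology and hence an algebraic lattice, apply the representation theorem of \cite{JarRad11} to obtain an isomorphism $\varphi\colon \mathbb{N}_\mathcal{M}\to\mathbb{RS}$, and compose. If anything, you give more detail than the paper on why Alexandrov topologies are algebraic lattices; the paper simply asserts this.
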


\section{Alexandrov spaces and rough sets}

Let $\mathbb{A} = (A,\vee,\wedge,{\sim},\to,0, 1)$ be a Nelson algebra
such that the lattice $(A,\leq)$ is algebraic.
In the lattice $(A,\leq)$, each element of $A$ can be represented
as the join of completely join-irreducible elements $\mathcal{J}$ below it
(see \cite{JarRad11}). 
Let us define an order $\triangleleft$ on $\mathcal{J}$ by setting
\[ x \triangleleft y \iff y \leq x \text{ in } A.\]
Let $\mathcal{U(J)}$ be the set of upsets with respect to $\triangleleft$.
Then, $\mathcal{U(J)}$ is an Alexandrov topology and a $\mathrm{T_0}$-space
(because $\triangleleft$ is a partial order on $\mathcal{J}$).
It is now clear that for all $x,y \in \mathcal{J}$,
\[ x \leq y \iff N(x) \subseteq N(y);\]
note that $N(x) = \{ y \in \mathcal{J} \mid x \triangleleft y \}$.

\medskip

It is known that the set of completely join-irreducible elements of
$\mathcal{U(J)}$ is $\mathcal{B} = \{ N(x) \mid x \in \mathcal{J} \}$.
We define a map 
\[ \varphi \colon \mathcal{J} \to \mathcal{B}, x \mapsto N(x).\]
Clearly, this map is an order-isomorphism between $(\mathcal{J},\leq)$
and $(\mathcal{B},\subseteq)$. This means that $\varphi$ can be
canonically extended to a lattice-isomorphism $\Phi \colon A \to \mathcal{U(J)}$
by
\begin{align*}
\Phi(x) & = \bigcup \{ \varphi(j) \mid j \in \mathcal{J} \text{ and } j \leq x \} \\
& = \bigcup \{ N(j) \mid j \in \mathcal{J} \text{ and } j \leq x \}.
\end{align*}
Obviously, $\Phi(0) = \emptyset$, $\Phi(1) = \mathcal{J}$, and since $A$
and $\mathcal{U(J)}$ are Heyting algebras, the relative pseudo\-complement
satisfies $\Phi(x \Rightarrow y) =  \Phi(x) \Rightarrow \Phi(y)$. This
is because the relative pseudo\-complement is unique in the sense that it
depends only on the order of the Heyting algebra in question, and now
the ordered sets $(A,\leq)$ and $(\mathcal{U(J)},\subseteq)$ are isomorphic.

Note that for all $x \in A$ and $j \in \mathcal{J}$,
\[ j \in \Phi(x) \iff j \leq x.\]
Namely, if $j \in \Phi(x)$, then $j \in N(k)$ for some $k \in \mathcal{J}$
such that $k \leq x$. Thus, $k \triangleleft j$ and $j \leq k$, which
give $j \leq x$. On the other hand, if $j \leq x$, then $j \in N(j)$
gives $j \in \Phi(x)$.

We may now define a map $g \colon \mathcal{J} \to \mathcal{J}$ by
setting 
\[
g(j) = \bigwedge \{ x \in A \mid x \nleq {\sim} j \}.   
\]
By our work \cite{JarRad11}, $(\mathcal{J},{\triangleleft},g)$
forms a Monteiro space. Thus, the structure
\[
  (\mathcal{U(J)}, \cup, \cap, \to, {\sim}, \emptyset, \mathcal{J})
\]
is a Nelson algebra. Because the operation $\to$ is defined in terms of 
$\Rightarrow$ and $\sim$, to show that this is isomorphic to $\mathbb{A}$, it
suffices to show that
\[ \Phi({\sim}x) = {\sim}\Phi(x) \]
for all $x \in A$. 
Now,
\[
  \Phi({\sim}x) = \{ j \in \mathcal{J} \mid j \leq {\sim} x\}.
\]
On the other hand, by the definition of $\sim$ in $\mathcal{U(J)}$, we have:
\begin{align*} 
\sim \Phi(x) & = \{ j \in \mathcal{J} \mid g(j) \notin \Phi(x) \} \\
 & = \{ j \in \mathcal{J} \mid g(j) \nleq x \}.
\end{align*}
We have noted in \cite{JarRad11} that for all $x \in A$ and
$j \in \mathcal{J}$, $g(j) \nleq x$ iff $j \leq {\sim} x$.
Therefore, we have proved the following theorem.

\begin{theorem}
Let $\mathbb{A}$ be a Nelson algebra such that the lattice $(A,\leq)$ is algebraic.
Then,  $\mathbb{A}$ and $(\mathcal{U(J)}, \cup, \cap, \to, {\sim}, \emptyset, \mathcal{J})$ 
are isomorphic. 
\end{theorem}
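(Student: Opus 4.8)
The plan is to verify that the lattice isomorphism $\Phi \colon A \to \mathcal{U(J)}$ constructed above is in fact a Nelson-algebra isomorphism. By construction $\Phi$ is a bijective lattice homomorphism with $\Phi(0) = \emptyset$ and $\Phi(1) = \mathcal{J}$, and since the relative pseudocomplement of a Heyting algebra is determined by its order alone, $\Phi$ also satisfies $\Phi(x \Rightarrow y) = \Phi(x) \Rightarrow \Phi(y)$ for all $x,y \in A$. Hence it remains only to check that $\Phi$ commutes with the two genuinely Nelson operations, the strong negation ${\sim}$ and the implication $\to$. For $\to$ I would first record that in any Nelson algebra whose underlying lattice carries a relative pseudocomplement one has $a \to b = a \Rightarrow ({\sim}a \vee b)$: by (N4), $c \leq a \to b$ iff $a \wedge c \leq {\sim}a \vee b$, and by the defining property of $\Rightarrow$ the latter holds iff $c \leq a \Rightarrow ({\sim}a \vee b)$, so the two elements coincide by antisymmetry. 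Exactly the same identity is the displayed definition of $\to$ in $\mathbb{N}_\mathcal{M}$. Therefore, once $\Phi$ is known to preserve $\vee$, $\Rightarrow$ and ${\sim}$, it automatically preserves $\to$, and the whole proof reduces to the single identity
\[ \Phi({\sim}x) = {\sim}\Phi(x) \qquad (x \in A). \]

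To prove this I would use the elementwise descriptions already at hand. Since $j \in \Phi(y) \iff j \leq y$ for every $j \in \mathcal{J}$, we get $\Phi({\sim}x) = \{\, j \in \mathcal{J} \mid j \leq {\sim}x \,\}$. Unfolding instead the definition of ${\sim}$ in the Nelson algebra $(\mathcal{U(J)}, \cup, \cap, \to, {\sim}, \emptyset, \mathcal{J})$ attached to the Monteiro space $(\mathcal{J}, \triangleleft, g)$, and applying the same equivalence to the element $g(j) \in \mathcal{J}$, we obtain ${\sim}\Phi(x) = \{\, j \in \mathcal{J} \mid g(j) \notin \Phi(x)\,\} = \{\, j \in \mathcal{J} \mid g(j) \nleq x \,\}$. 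The two right-hand sides coincide by the equivalence $g(j) \nleq x \iff j \leq {\sim}x$ established in \cite{JarRad11} for the map $g(j) = \bigwedge \{\, x \in A \mid x \nleq {\sim}j \,\}$, which finishes the verification and hence the theorem.

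The genuinely nontrivial input here is the cited fact from \cite{JarRad11} that $(\mathcal{J}, \triangleleft, g)$ is a Monteiro space — equivalently, that the infimum defining $g$ again lies in $\mathcal{J}$ and satisfies (J1)--(J4) — since this is what guarantees that $(\mathcal{U(J)}, \cup, \cap, \to, {\sim}, \emptyset, \mathcal{J})$ is a Nelson algebra at all, and it also supplies the characterisation $g(j) \nleq x \iff j \leq {\sim}x$ used above. Granting that, everything else is bookkeeping with the already-constructed isomorphism $\Phi$, so I expect no further obstacle.
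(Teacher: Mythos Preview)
Your argument is correct and follows essentially the same route as the paper: reduce everything to the single identity $\Phi({\sim}x) = {\sim}\Phi(x)$, and verify that via the elementwise description $j \in \Phi(y) \iff j \leq y$ together with the equivalence $g(j) \nleq x \iff j \leq {\sim}x$ from \cite{JarRad11}. The only difference is that you explicitly derive the identity $a \to b = a \Rightarrow ({\sim}a \vee b)$ in $\mathbb{A}$ from axiom (N4), whereas the paper simply asserts that $\to$ is determined by $\Rightarrow$ and ${\sim}$; your version is slightly more self-contained on this point.
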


\medskip

We end this work by presenting the following theorem that
shows how certain structures studied in this work can be
considered as equivalent structures.

\begin{theorem} \label{Thm:Main}
The following structures determine each other  ``up-to-isomorphism'' 
(and hence they can be considered equivalent):
 \begin{enumerate}[\rm (i)]
  \item Rough sets induced by quasiorders;
  \item Nelson algebras defined on algebraic lattices;
  \item Nelson algebras defined on $\mathrm{T_0}$-spaces that are Alexandrov topologies;
  \item Monteiro spaces.
 \end{enumerate}
\end{theorem}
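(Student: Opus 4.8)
The plan is to organise the argument as a chain of canonical constructions linking the four classes, (i) $\leftrightarrow$ (ii) $\leftrightarrow$ (iii) $\leftrightarrow$ (iv), where each link is a pair of maps that are mutually inverse up to isomorphism; composing the links then yields the full web showing that any two of the four determine each other up to isomorphism. Most of the links are already available in the excerpt, so the proof is mainly a matter of assembling them and verifying one round trip.

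For (i) $\leftrightarrow$ (ii): a quasiorder $R$ on $U$ produces, by \cite{JarRad11}, the Nelson algebra $\mathbb{RS}$, and its lattice reduct $\mathit{RS}$ --- a complete sublattice of $\wp(U)\times\wp(U)$ --- is algebraic; conversely, \cite{JarRad11} shows that every Nelson algebra on an algebraic lattice is isomorphic to some $\mathbb{RS}$. For (ii) $\leftrightarrow$ (iii): every Alexandrov topology ordered by $\subseteq$ is an algebraic lattice (a fact already used above for $\mathcal{U}(\mathcal{F}_p)$), so any Nelson algebra carried by a $\mathrm{T_0}$-Alexandrov topology is in particular one on an algebraic lattice, while the preceding theorem shows conversely that every Nelson algebra on an algebraic lattice is isomorphic to $(\mathcal{U(J)},\cup,\cap,\to,{\sim},\emptyset,\mathcal{J})$, whose carrier is a $\mathrm{T_0}$-Alexandrov topology. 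Hence (i), (ii) and (iii) determine the same Nelson algebras up to isomorphism.

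The link (iii) $\leftrightarrow$ (iv) carries the real content. From (iv) to (iii) one uses Vakarelov's construction $\mathcal{M}=(X,\leq,g)\mapsto\mathbb{N}_\mathcal{M}$, noting that $\mathcal{U}(X)$ is a $\mathrm{T_0}$-Alexandrov topology since $\leq$ is a partial order. From (iii) to (iv): given a Nelson algebra $\mathbb{B}$ whose lattice reduct is a $\mathrm{T_0}$-Alexandrov topology, I form its completely join-irreducible elements $\mathcal{J}$ with the order $\triangleleft$ --- which, by the bijection between partial orders and $\mathrm{T_0}$-Alexandrov topologies, reproduces the underlying poset up to isomorphism --- together with $g(j)=\bigwedge\{x\mid x\nleq{\sim}j\}$; by \cite{JarRad11}, $(\mathcal{J},\triangleleft,g)$ is a Monteiro space, and the preceding theorem identifies $\mathbb{B}$ with $\mathbb{N}_{(\mathcal{J},\triangleleft,g)}$ up to isomorphism. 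To see that the two passages are mutually inverse, I would check that for a Monteiro space $(X,\leq,g)$ the completely join-irreducibles of $\mathcal{U}(X)$ are exactly the principal upsets ${\uparrow}x$, that $x\mapsto{\uparrow}x$ is an order-isomorphism $(X,\leq)\to(\mathcal{J},\triangleleft)$, and that under it the recovered $g$ agrees with the original one --- which follows from the characterisation quoted above from \cite{JarRad11}, namely that $g(j)\nleq x$ iff $j\leq{\sim}x$, because the strong negation of $\mathbb{N}_\mathcal{M}$ is itself defined from $g$.

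I expect the one genuinely new verification --- and hence the main obstacle --- to be precisely this last round trip: making the informal phrase ``determine each other up to isomorphism'' precise and confirming that the Monteiro space recovered along (iii)$\to$(iv) is isomorphic to the one we started from, that is, that the map read off from the strong negation is the original $g$ and that any isomorphism between the resulting Nelson algebras restricts to an isomorphism of the underlying Monteiro spaces. Everything else reduces to chaining the already-established isomorphisms $\mathbb{A}\cong\mathbb{RS}$, $\mathbb{A}\cong(\mathcal{U(J)},\dots)$ and $\mathbb{N}_\mathcal{M}\cong\mathbb{RS}$, together with the standard interchangeability of partial orders, $\mathrm{T_0}$-Alexandrov topologies, and the algebraic lattices arising as their lattices of open sets.
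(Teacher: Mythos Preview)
Your proposal is correct and follows essentially the same route as the paper: the equivalence of (i), (ii), (iii) is obtained by chaining the representation $\mathbb{A}\cong\mathbb{RS}$ from \cite{JarRad11} with the preceding theorem $\mathbb{A}\cong(\mathcal{U(J)},\dots)$, and (iv) is linked in via Vakarelov's $\mathcal{M}\mapsto\mathbb{N}_\mathcal{M}$ in one direction and the $(\mathcal{J},\triangleleft,g)$ construction in the other. The paper's own proof is in fact terser than yours and does not spell out the Monteiro-space round trip you flag as the main obstacle---it records only that (ii)$\to$(iv)$\to$(iii) returns the original algebra (which is exactly the content of the preceding theorem)---so your sketch of why (iv)$\to$(iii)$\to$(iv) recovers the original $(X,\leq,g)$ up to isomorphism actually supplies a detail the paper leaves implicit.
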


\begin{proof}
Cases (i), (ii) and (iii) are all ``equivalent'', as we have seen: each
Nelson algebra defined on an algebraic lattice can be represented up to
isomorphism as (i) and (iii). Each Monteiro space induces a Nelson algebra
defined on an algebraic lattice, and each Nelson algebra defined on an
algebraic lattice induces a Monteiro space that determines an Alexandrov 
topology Nelson algebra isomorphic to the original algebra.
Thus, (ii) and (iv) are equivalent.
\end{proof}

In our next example, we illustrate the isomorphisms between different structures.

\begin{example}
As we already noted, for a Nelson algebra $\mathbb{A}$ defined on an algebraic lattice, its each element can be represented
as the join of completely join-irreducible elements $\mathcal{J}$ below it. Therefore, concerning the structure of $\mathbb{A}$,
the essential thing is how its completely join-irreducible elements are related to each other. In addition, isomorphisms between
Nelson algebras defined on algebraic lattices are completely defined by maps on completely join-irreducible elements.

The map $g \colon \mathcal{J} \to \mathcal{J}$, defined by 
\begin{equation} \label{Eq:DefOfMap} \tag{$\star$}
 g(j) = \bigwedge \{ x \in A \mid x \nleq {\sim} j \},
\end{equation}
satisfies conditions (J1)--(J4) as noted in page~\pageref{Page:Js}.
Particularly, we have by (J3) that $j \leq g(j)$ or $g(j) \leq j$ for any $j \in \mathcal{J}$. 
We define for every $j \in \mathcal{J}$ a ``representative'' $\rho(j)$ by
\[
\rho(j) = \left \{
\begin{array}{ll}
j    & \mbox{ if $j \leq g(j)$}\\
g(j)  & \mbox{ otherwise.}
\end{array}
\right .
\]
In terms of $\rho$, we define a quasiorder $R$ on $U = \mathcal{J}$ by setting $x \, R \, y \iff \rho(x) \leq \rho(y)$.

In \cite{JarRad11}, we showed that for this quasiorder $R$ on $\mathcal{J}$, 
$\mathbb{RS}$ and $\mathbb{A}$ are isomorphic Nelson algebras. If  $\mathcal{J}(RS)$
denotes the set of completely join-irreducible elements of $\mathit{RS}$, then the isomorphism
$\varphi \colon \mathcal{J} \to \mathcal{J}(RS)$ is defined by 
\[
\varphi(j) = \left \{
\begin{array}{ll}
(\emptyset,\{j\}^\UP) & \mbox{ if $j \leq g(j)$}\\
(R(j),R(j)^\UP)       & \mbox{ otherwise.}
\end{array}
\right .
\]

\begin{figure}[h]
\centering
\includegraphics[width=\textwidth]{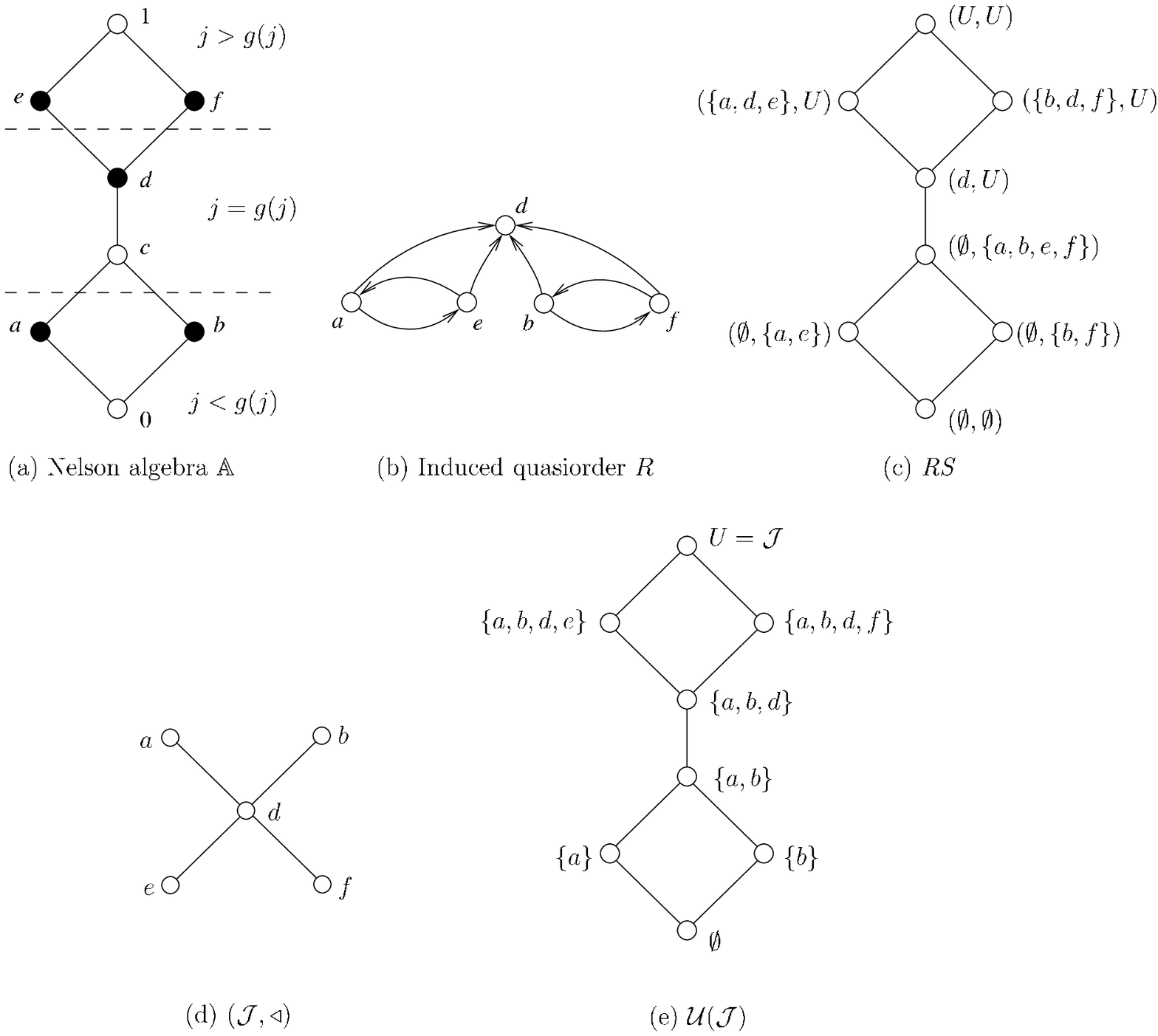}
\caption{\label{Fig:figure1}}
\end{figure}

Consider the Nelson algebra $\mathbb{A}$ of Figure~\ref{Fig:figure1}(a). Because it is finite, it
is trivially defined on an algebraic lattice. Suppose that the operation $\sim$ is defined by
${\sim}0 = 1$, ${\sim} a = f$, ${\sim} b = e$, and ${\sim} c = d$. The completely join-irreducible
elements $\mathcal{J}$ are marked by filled circles, and we have $g(a) = e$, $g(b) = f$, and $g(d) = d$.
The induced quasiorder on $U = \mathcal{J} = \{a,b,d,e,f\}$ is given in Figure~\ref{Fig:figure1}(b) and
the corresponding rough set structure $\mathit{RS}$ is depicted in Figure~\ref{Fig:figure1}(c).
Recall that the operation $\sim$ is defined in $\mathbb{RS}$ by ${\sim}(X^\DOWN,X^\UP) = (-X^\UP, -X^\DOWN)$.

On the other hand, the partially ordered set $(\mathcal{J},{\triangleleft})$ induced by $\mathbb{A}$ is given in Figure~\ref{Fig:figure1}(d).
The corresponding structure of its upsets $\mathcal{U(J)}$ can be seen in Figure~\ref{Fig:figure1}(e). 
If $\mathcal{B}$ denotes the set of completely join-irreducible elements of $\mathcal{U(J)}$, 
then the mapping $\psi \colon \mathcal{J} \to \mathcal{B}$ defined by $j \mapsto N(j)$ is
an order-isomorphisms, where $N(j)$ is the principal filter of $j$ with respect to $\triangleleft$. 
The map $g$ in the Monteiro space $(\mathcal{J}, {\triangleleft}, g)$
is defined by \eqref{Eq:DefOfMap}, and the operation $\sim$ in the 
Nelson algebra  $(\mathcal{U(J)}, \cup, \cap, \to, {\sim}, \emptyset, \mathcal{J})$ is given by
${\sim} X = \{ j \in \mathcal{J} \mid g(j) \notin X \}$.
\end{example}

\section{Some concluding remarks}
We end this paper by noting that there is a perfect analogy between the algebraic counterparts of 
classical logic and constructive logic with strong negation:
The basic algebraic structures of classical logic are Boolean algebras, and by the well-known
representation theorem of M.~H.~Stone, any Boolean algebra is isomorphic with a field of sets, that is, with a 
subalgebra of the Boolean algebra defined on a power set. Analogously, the algebraic 
counterparts of constructive logic with strong negation are Nelson algebras, and any Nelson algebra is 
isomorphic to a subalgebra of the Nelson algebra defined on a rough set lattice, according to 
Theorem~\ref{Thm:NelsonRepresentation}.

Theorem~\ref{Thm:Main} means that a rough set system determined by a quasiorder can be treated as a family of upsets of a partially ordered set, 
which is a well-studied structure in the literature. For any partially ordered set, its upsets form a complete lattice, but in our case, upsets 
form an Alexandrov $\mathrm{T_0}$-space provided with operations of a Nelson algebra. 
This also opens new approaches for further research concerning the representation of particular objects in the category of Nelson algebras.

\begin{bibdiv}
\begin{biblist}

\bib{Alex37}{article}{
      author={Alexandroff, Paul},
       title={Diskrete r{\"a}ume},
        date={1937},
     journal={Matemati{\v{c}}eskij Sbornik},
      volume={2},
       pages={501\ndash 518},
}

\bib{Birk37}{article}{
      author={Birkhoff, Garrett},
       title={Rings of sets},
        date={1937},
     journal={Duke Mathematical Journal},
      volume={3},
       pages={443\ndash 454},
}

\bib{Cign86}{article}{
      author={Cignoli, Roberto},
       title={The class of {K}leene algebras satisfying an interpolation
  property and {N}elson algebras},
        date={1986},
     journal={Algebra Universalis},
      volume={23},
       pages={262\ndash 292},
}

\bib{Comer93}{incollection}{
      author={Comer, Stephen~D.},
       title={On connections between information systems, rough sets, and
  algebraic logic},
        date={1993},
   booktitle={Algebraic methods in logic and computer science},
      series={Banach Center Publications},
       pages={117\ndash 124},
}

\bib{DemOrl02}{book}{
      author={Demri, Stéphane~P.},
      author={Orlowska, Ewa~S.},
       title={Incomplete information: Structure, inference, complexity},
   publisher={Springer},
        date={2002},
}

\bib{GeWa92}{article}{
      author={Gehrke, Mai},
      author={Walker, Elbert},
       title={On the structure of rough sets},
        date={1992},
     journal={Bulletin of Polish Academy of Sciences. Mathematics},
      volume={40},
       pages={235\ndash 245},
}

\bib{hassanien2007rough}{book}{
      editor={Hassanien, A.E.},
      editor={Suraj, Z.},
      editor={Slezak, D.},
      editor={Lingras, P.},
       title={Rough computing: Theories, technologies and applications},
   publisher={IGI Global},
        date={2007},
}

\bib{Iturrioz99}{incollection}{
      author={Iturrioz, Luisa},
       title={Rough sets and three-valued structures},
        date={1999},
   booktitle={Logic at work. essays dedicated to the memory of helena rasiowa},
      editor={Or{\l}owska, Ewa},
   publisher={Physica-Verlag},
       pages={596\ndash 603},
}

\bib{Iwin87}{article}{
      author={Iwi{\'n}ski, Tadeusz~B.},
       title={Algebraic approach to rough sets},
        date={1987},
     journal={Bulletin of Polish Academy of Sciences. Mathematics},
      volume={35},
       pages={673\ndash 683},
}

\bib{Jarv07}{article}{
      author={J{\"a}rvinen, Jouni},
       title={Lattice theory for rough sets},
        date={2007},
     journal={Transactions on Rough Sets},
      volume={{VI}},
       pages={400\ndash 498},
}

\bib{JPR12}{article}{
      author={J{\"a}rvinen, Jouni},
      author={Pagliani, Piero},
      author={Radeleczki, Sándor},
       title={Information completeness in {N}elson algebras of rough sets
  induced by quasiorders},
        date={2013},
     journal={Studia Logica},
      volume={101},
       pages={1073\ndash 1092},
}

\bib{JarRad11}{article}{
      author={J{\"a}rvinen, Jouni},
      author={Radeleczki, S{\'a}ndor},
       title={Representation of {N}elson algebras by rough sets determined by
  quasiorders},
        date={2011},
     journal={Algebra Universalis},
      volume={66},
       pages={163\ndash 179},
}

\bib{JRV09}{article}{
      author={J{\"a}rvinen, Jouni},
      author={Radeleczki, S{\'a}ndor},
      author={Veres, Laura},
       title={Rough sets determined by quasiorders},
        date={2009},
     journal={Order},
      volume={26},
       pages={337\ndash 355},
}

\bib{Markov50}{article}{
      author={Markov, A.~A.},
       title={Constructive logic (in russian)},
        date={1950},
     journal={Uspekhi Matematicheskih Nauk},
      volume={5},
       pages={187\ndash 188},
}

\bib{Mont63a}{article}{
      author={Monteiro, Antonio},
       title={Construction des alg{\'e}bres de {N}elson finies},
        date={1963},
     journal={Bulletin de l'Academie Polonaise des Sciences},
      volume={11},
       pages={359\ndash 362},
}

\bib{umadevi12}{article}{
      author={Nagarajan, E.},
      author={Umadevi, D.},
       title={A method of representing rough sets system determined by quasi
  orders},
        date={2013},
     journal={Order},
      volume={30},
       pages={313\ndash 337},
}

\bib{Nelson49}{article}{
      author={Nelson, D.},
       title={Constructible falsity},
        date={1949},
     journal={Journal of Symbolic Logic},
      volume={14},
       pages={16\ndash 26},
}

\bib{pagliani2008geometry}{book}{
      author={Pagliani, Piero},
      author={Chakraborty, Mihir},
       title={A geometry of approximation. {R}ough set theory: Logic, algebra
  and topology of conceptual patterns},
   publisher={Springer},
        date={2008},
}

\bib{Pawl82}{article}{
      author={Pawlak, Zdzis{\l}aw},
       title={Rough sets},
        date={1982},
     journal={International Journal of Computer and Information Sciences},
      volume={11},
       pages={341\ndash 356},
}

\bib{PomPom88}{article}{
      author={Pomyka{\l}a, Jacek},
      author={Pomyka{\l}a, Janusz~A.},
       title={The {S}tone algebra of rough sets},
        date={1988},
     journal={Bulletin of Polish Academy of Sciences. Mathematics},
      volume={36},
       pages={495\ndash 512},
}

\bib{Rasiowa74}{book}{
      author={Rasiowa, Helena},
       title={An algebraic approach to non-classical logics},
   publisher={North-Holland},
     address={Amsterdam},
        date={1974},
}

\bib{Vaka77}{article}{
      author={Vakarelov, Dimiter},
       title={Notes on {N}-lattices and constructive logic with strong
  negation},
        date={1977},
     journal={Studia Logica},
      volume={36},
       pages={109\ndash 125},
}

\bib{Vaka05}{article}{
      author={Vakarelov, Dimiter},
       title={Nelson’s negation on the base of weaker versions of
  intuitionistic negation},
        date={2005},
     journal={Studia Logica},
      volume={80},
       pages={393\ndash 430},
}

\end{biblist}
\end{bibdiv}

\end{document}